\documentclass[a4paper, twoside, reqno]{amsart} % reqno = right equation numbers
\usepackage{xcolor}
\usepackage{amssymb} % \nmid
\usepackage{amsthm}
\usepackage{amsmath}
\usepackage{mathtools} % \coloneqq

\usepackage{microtype}

\usepackage{hyperref} % \href
\definecolor{bluey}{rgb}{0,0,0.6}
\hypersetup{colorlinks,
linkcolor=bluey,
citecolor=bluey,
urlcolor=bluey}

% https://tex.stackexchange.com/a/223898
\usepackage{mmacells} % https://github.com/jkuczm/mmacells
\usepackage{changepage} % temporarily adjust margins

\newcommand{\CC}{\mathbf C}

% https://tex.stackexchange.com/questions/2476/using-latex-to-render-hypergeometric-function-notation
\newmuskip\pFqmuskip
\newcommand*\pFq[6][8]{%
  \begingroup % only local assignments
  \pFqmuskip=#1mu\relax
  \mathchardef\normalcomma=\mathcode`,
  % make the comma math active
  \mathcode`\,=\string"8000
  % and define it to be \pFqcomma
  \begingroup\lccode`\~=`\,
  \lowercase{\endgroup\let~}\pFqcomma
  % typeset the formula
  {}_{#2}F_{#3}{\left(\genfrac..{0pt}{}{#4}{#5};#6\right)}%
  \endgroup
}
\newcommand{\pFqcomma}{{\normalcomma}\mskip\pFqmuskip}

% https://tex.stackexchange.com/questions/319398/making-the-dot-in-the-theorem-normal-but-not-as-like-bf
\makeatletter
\def\thmhead@plain#1#2#3{%
  \thmname{#1}\thmnumber{\@ifnotempty{#1}{ }\@upn{#2}}%
  \thmnote{ \the\thm@notefont(#3)}}% no group inside \thmnote
\let\thmhead\thmhead@plain
\makeatother

\newtheorem{theorem}{Theorem}
\newtheorem{appendixtheorem}{Theorem}
\newtheorem{lemma}{Lemma} % https://tex.stackexchange.com/a/144536

\begin{document}

\title[Two product formulas for counting successive vertex orderings]%
{Two product formulas for counting\\successive vertex orderings}
\author[B.~S. Ho]{Boon Suan Ho}
\address{College of Design and Engineering, National University of Singapore}
\email{\href{mailto:hbs@u.nus.edu}{hbs@u.nus.edu}}
\keywords{Generalized hypergeometric identities, vertex orderings}
\subjclass{05A19, 05C30}

\begin{abstract} 
A \emph{vertex ordering} of a graph $G$ is a 
bijection $\pi\colon\{1,\dots,|V(G)|\}\to V(G)$. 
It is \emph{successive} if the induced subgraph 
$G[v_{\pi(1)},\dots,v_{\pi(k)}]$ is connected for each $k$. 
Fang et al.\ [{\sl J. Comb.\ Theory} \textbf{A199} (2023), 105776]
gave formulas for counting the number of successive vertex orderings
for a class of graphs they called \emph{fully regular}, and conjectured
that these formulas could be written as certain products involving 
differences or ratios of binomial coefficients in two cases: When the
graph is the line graph $L(K_n^{(3)})$ of the complete $3$-uniform hypergraph, 
or when it is the line graph $L(K_{m,n}^{(1,2)})$ of a complete ``bipartite'' $3$-uniform hypergraph. 
In this paper, we confirm both of these conjectures.
\end{abstract}

\maketitle

A \emph{vertex ordering} of a graph $G$ is a 
bijection $\pi\colon\{1,\dots,|V(G)|\}\to V(G)$. 
It is \emph{successive} if the induced subgraph 
$G[v_{\pi(1)},\dots,v_{\pi(k)}]$ is connected for each $k$. 
Fang et al.\ \cite{fang} defined the class of \emph{fully regular graphs},
and proved that for such graphs, the probability of a random vertex ordering being successive
is given by a sum of products involving certain invariants of the graph. In two special cases, 
they conjectured that this sum admits a nice product expression. 
By manipulating certain generalized hypergeometric identities, 
we prove those conjectures in this paper.

\begin{theorem}[Fang et al.\ \cite{fang}, Conjecture 5.2] \label{theorem:1}
We have, for $n\ge3$,
\[\sum_{i=0}^{\lfloor n/3\rfloor-1}\prod_{j=1}^i\frac{-\binom{n-3j}{3}}{\binom{n}{3}-\binom{n-3j}{3}}
=\Bigl\lfloor\frac{n}{3}\Bigr\rfloor\prod_{\substack{j=n+1\\3\nmid j}}^{\lfloor3n/2\rfloor-2}c_j\Bigg/\prod_{\substack{j=3\\3\mid j}}^{n-3}c_j,\]
where $c_j=\frac{6}{j}(\binom{n}{3}-\binom{n-j}{3})=j^2 + (3-3n)j + 3n^2-6n+2$.
\end{theorem}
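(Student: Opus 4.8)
The plan is to recognize the left-hand side as a terminating generalized hypergeometric series at argument $1$ and then to evaluate it in closed form. First I would compute the ratio of consecutive summands of the left-hand side. Writing the $i$-th summand as $a_i=\prod_{j=1}^{i}\frac{-\binom{n-3j}{3}}{\binom{n}{3}-\binom{n-3j}{3}}$ and using $\binom{n}{3}-\binom{n-3j}{3}=\tfrac{j}{2}c_{3j}$ together with $-\binom{n-3j}{3}=\tfrac16(3j-n)(3j-n+1)(3j-n+2)$, the ratio $a_{i+1}/a_i$ becomes a rational function of $i$ whose numerator is a cubic and whose denominator is $(i+1)$ times the quadratic $c_{3(i+1)}$. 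Factoring yields
\[a_i=\frac{\left(\tfrac{3-n}{3}\right)_i\left(\tfrac{4-n}{3}\right)_i\left(\tfrac{5-n}{3}\right)_i}{(b_1)_i\,(b_2)_i\,(1)_i},\]
so the left-hand side equals the terminating ${}_3F_2\big(\tfrac{3-n}{3},\tfrac{4-n}{3},\tfrac{5-n}{3};\,b_1,b_2;\,1\big)$, where $b_1,b_2$ are the shifted roots of $c_{3k}$ and satisfy $b_1+b_2=3-n$, $b_1b_2=\tfrac19(3n^2-15n+20)$. Exactly one upper parameter is the non-positive integer $-(\lfloor n/3\rfloor-1)$, which both terminates the series and pins down the summation range.

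Next I would put the right-hand side into the same currency. The three upper parameters form an arithmetic progression with common difference $\tfrac13$, so the triplication identity $\big(\tfrac{a}{3}\big)_i\big(\tfrac{a+1}{3}\big)_i\big(\tfrac{a+2}{3}\big)_i=27^{-i}(a)_{3i}$ collapses them, while $(b_1)_i(b_2)_i=9^{-i}\prod_{k=1}^{i}c_{3k}$ turns the denominator Pochhammer symbols back into the products of $c_{3k}$ that appear on the right. I would likewise rewrite each product $\prod_j c_j$ as a ratio of Gamma functions via $c_j=(j-r_1)(j-r_2)$, using the reflection symmetry $c_j=c_{3(n-1)-j}$ (the quadratic $c_j$ is symmetric about $j=\tfrac32(n-1)$) to fold the numerator range $n+1\le j\le\lfloor 3n/2\rfloor-2$ against the denominator range $3\le j\le n-3$. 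Because $r_1,r_2$ are complex conjugates the individual Gamma factors are complex, but they recombine into the manifestly real product claimed.

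The crux is the evaluation of the ${}_3F_2(1)$ itself. A direct check shows it is neither Saalschützian (its parameter excess equals $-1$) nor well-poised for generic $n$, and, because $c_{3k}$ is irreducible over the rationals, its lower parameters $b_1,b_2$ are an irrational complex-conjugate pair; hence no off-the-shelf summation applies verbatim. My first attempt would be to exploit the step-$\tfrac13$ structure through a cubic (Bailey-type) transformation, or an appropriate member of Thomae's group of relations, to carry the series to a balanced one summable by Pfaff--Saalschütz. The essential point is that $b_1,b_2$ must be manipulated only through their symmetric functions $b_1+b_2$ and $b_1b_2$, equivalently through the $c_j$, so that the complex arguments never surface individually. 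The reversal $i\mapsto(\lfloor n/3\rfloor-1)-i$ already cleans up one pair of parameters (to $\tfrac13,\tfrac23$ when $3\mid n$, and analogously in the other residues), which I expect to be the right starting point.

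As a guaranteed fallback, I would prove the identity by induction. Fixing the residue of $n$ modulo $3$ and writing $n=3m+r$ makes the floor function and the divisibility conditions on $j$ explicit, so both sides become honest hypergeometric expressions in $m$; creative telescoping (Gosper/Zeilberger applied to the summand $F(m,i)$) then yields a first-order recurrence in $m$ for the left-hand side, while the right-hand side's ratio across $m\mapsto m+1$ is computed directly from the product formula using $c_j=c_{3(n-1)-j}$. Matching the two recurrences and checking the base cases $n=3,4,5$ finishes the argument. The persistent obstacles are the irreducible complex-conjugate parameters $b_1,b_2$ and the bookkeeping of the three residue classes modulo $3$ forced by the floor functions.
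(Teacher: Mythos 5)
You set up exactly the right first step: the left-hand side is indeed the terminating series \({}_3F_2\bigl(\tfrac{3-n}{3},\tfrac{4-n}{3},\tfrac{5-n}{3};b_1,b_2;1\bigr)\) with \(b_1+b_2=3-n\) and \(b_1b_2=\tfrac19(3n^2-15n+20)\), and your Gamma-folding of the products via the reflection \(c_j=c_{3(n-1)-j}\) is essentially the paper's endgame bookkeeping (the paper uses \(c_k=c_{3n-3-k}\) together with \(b_j=c_{n-1+3j}=c_{2n-2-3j}\) to split one product into three residue classes mod \(3\)). But the heart of the proof, the actual evaluation of the \({}_3F_2(1)\), is missing, and neither of your proposed routes closes the gap. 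Route A hopes a cubic or Thomae-type transformation carries the series to a balanced one; it cannot, by the very excess bookkeeping you began. The excess is \(-1\); reversal of a terminating series preserves the excess, and the two-term transformations of terminating \({}_3F_2(1)\) series (generated by parameter permutations, reversal, and Sheppard's transformation, which sends the excess to \(1-\beta\) where \(\beta\) is the upper parameter it eliminates) can only move the excess to \(\tfrac{n-1}{3}\), \(\tfrac{n-2}{3}\), or to non-real values of the form \(\tfrac16(3-n\pm X_n)\) with \(X_n=\sqrt{1+6n-3n^2}\) purely imaginary for \(n\ge3\). None of these is identically \(1\), so apart from the degenerate cases \(n=4,5\) no member of the orbit is Saalsch\"utzian, and Pfaff--Saalsch\"utz is unreachable; Bailey-type cubic transformations likewise demand well-poised parameter patterns absent here. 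The paper's actual move is subtler: apply Sheppard's identity with \(N=\lfloor n/3\rfloor-1\) (splitting into cases mod \(3\)), which trades the series for an explicit Pochhammer ratio times a new \({}_3F_2\) with upper parameter \(2\); that new series is summable by no classical theorem but is Gosper-summable, with explicit rational certificate \(R_1(k)=-\frac{6+9k^2-3k(n-5)+n(n-4)}{3(k+1)(n-2)}\), telescoping to \(\frac{n^2-4n+6}{3n-6}\).

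Route B, your ``guaranteed fallback,'' is not guaranteed --- it is inapplicable as stated. Zeilberger's algorithm requires the summand to be a proper hypergeometric term in the pair \((m,i)\) with \(n=3m+r\). It is hypergeometric in \(i\), but the \(m\)-shift ratio \(F(m+1,i)/F(m,i)\) contains the factor \(\prod_{j=1}^{i}c_{3j}(m+1)/c_{3j}(m)\), which is not a rational function of \((m,i)\): the conjugate roots of the quadratics \(c_{3j}\) are governed by \(X_n\) and do not move by integers under \(n\mapsto n+3\), so the pair \((b_1)_i(b_2)_i\) admits no hypergeometric shift in \(m\). This is precisely the ``persistent obstacle'' you name in your last sentence, but naming it does not defuse it --- it breaks the very algorithm your fallback rests on, and even granting a recurrence there is no a priori reason creative telescoping would return one of order one. (Gosper in the \(i\)-direction alone does not dispose of the original summand either; the whole point of the paper's use of Sheppard's identity, proved self-contained as Theorem A1, is to manufacture a Gosper-summable series while the conjugate parameters enter only through the real quadratics \(c_j\), exactly as you rightly insist they must.) So your proposal agrees with the paper at the setup and at the final product-matching, but lacks the one idea that makes the proof work.
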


By Theorem 1.4 of \cite{fang}, the expression on the left is equal to the probability that a vertex ordering of $L(K_n^{(3)})$
is successive; equivalently, the expression multiplied by $|V|!$ counts the number of successive vertex orderings of $L(K_n^{(3)})$.
(We write $L(G)$ for the \emph{line graph} of $G$, which is a graph 
whose vertices are the edges of $G$, where two edges are considered adjacent
if they share a common vertex.)
\begin{theorem}[Fang et al.\ \cite{fang}, Conjecture 5.3] \label{theorem:2}
We have, for $m\ge1$ and $n\ge2$,
\[
\sum_{i=0}^{\min\{m,\lfloor n/2\rfloor\}}\prod_{j=1}^i\frac{-(m-j)\binom{n-2j}{2}}{m\binom{n}{2}-(m-j)\binom{n-2j}{2}}
=m\prod_{j=1}^{m-1}\frac{mn-\binom{m+1}{2}+\binom{j}{2}}{d_j},
\]
where $d_j=\frac{1}{j}(m\binom{n}{2}-(m-j)\binom{n-2j}{2})$,
and where any numerators or denominators on the right that are equal to zero are ignored.
\end{theorem}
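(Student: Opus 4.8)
The plan is to recast the left-hand side as a terminating ${}_3F_2$ evaluated at $1$ and to recognize the right-hand side as the value this series must take. Writing $P_i$ for the $i$th summand (so $P_0=1$ and $P_{i+1}/P_i$ is the $(i{+}1)$st factor), a short computation factors the term ratio. Using $\binom{n-2j}{2}=2\bigl(j-\tfrac{n-2}{2}\bigr)\bigl(j-\tfrac{n-3}{2}\bigr)$ and $m\binom{n}{2}-(m-j)\binom{n-2j}{2}=j\,d_j$ with $d_j=2(j-k_+)(j-k_-)$, where $k_\pm=\tfrac14\bigl(2m+2n-1\pm\sqrt{D}\bigr)$ and $D=(2m+1)^2-8mn$, one finds
\[
\frac{P_{i+1}}{P_i}=\frac{(i+1-m)\,\bigl(i-\tfrac{n-2}{2}\bigr)\,\bigl(i-\tfrac{n-3}{2}\bigr)}{(i+1)\,(i+1-k_+)\,(i+1-k_-)}.
\]
Hence the left-hand side $S(m,n)$ equals the terminating series ${}_3F_2(1-m,\,1-\tfrac{n}{2},\,\tfrac{3-n}{2};\,1-k_+,\,1-k_-;\,1)$. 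Two features stand out: the upper parameters $1-\tfrac{n}{2}$ and $\tfrac{3-n}{2}$ differ by $\tfrac12$, while the lower parameters $1-k_\pm$ are the (generically irrational, possibly complex) conjugate roots of $d_j$.

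Next I would rewrite the right-hand side in the same language. Factoring the numerator as $mn-\binom{m+1}{2}+\binom{j}{2}=\tfrac12(j-\mu_+)(j-\mu_-)$ with $\mu_\pm=\tfrac12(1\pm\sqrt{D})$, and using $\mu_++\mu_-=1$ together with $d_j=2(j-k_+)(j-k_-)$, the product collapses to
\[
R(m,n)=\frac{m}{4^{\,m-1}}\cdot\frac{(\mu_+)_{m-1}(\mu_-)_{m-1}}{(1-k_+)_{m-1}(1-k_-)_{m-1}},
\]
a ratio in which numerator and denominator are each taken over a conjugate pair, so that the irrational parts cancel and $R$ is manifestly rational. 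The target is therefore the single hypergeometric evaluation $S=R$, and the two parameter pairs are linked by the doubling relation $\mu_\pm=2k_\pm-(m+n-1)$, precisely the sort of relation a quadratic transformation produces.

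The main work, and the main obstacle, is the evaluation itself. The series is terminating but neither Saalschützian (the sum of the lower parameters minus the sum of the upper parameters is $-1$, not $+1$) nor well-poised, so none of the classical closed forms---Pfaff--Saalschütz, Dixon, Watson, Whipple---applies directly. What I would exploit instead is the half-integer gap: combining $(1-\tfrac{n}{2})_i(\tfrac{3-n}{2})_i=4^{-i}(2-n)_{2i}$ exposes a quadratic (doubling) structure, and I would apply the matching quadratic transformation to convert $S$ into a series that \emph{is} summable in closed form, after which its value is compared with $R$ via the doubling relation above. Throughout, the conjugate pair $1-k_\pm$ must be carried together so every intermediate quantity stays rational; the degenerate cases in which a factor of $d_j$ or of the numerator vanishes (the situation covered by the paper's ``ignore zeros'' proviso) are handled by first proving the identity as an equality of rational functions of $n$ with $D$ treated as a parameter, the stated convention then following by continuity.

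Should pinning down the exact transformation prove stubborn, a guaranteed fallback is creative telescoping: Zeilberger's algorithm yields a first-order recurrence in $m$ for $S(m,n)$, and one then checks that $R(m,n)$ satisfies the same recurrence and agrees at the base case $m=1$, where both sides equal $1$. I expect this recurrence route to be routine once set up, so the real content lies in locating the clean quadratic-transformation argument that matches the paper's stated method.
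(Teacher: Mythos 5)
Your reduction of both sides is correct and coincides with the paper's opening move: the left-hand side is the terminating series ${}_3F_2\bigl(1-m,\,1-\tfrac{n}{2},\,\tfrac{3}{2}-\tfrac{n}{2};\,1-k_+,\,1-k_-;\,1\bigr)$, whose lower parameters are exactly the paper's $\tfrac14(5-2m-2n\mp Y_{m,n})$, and your rewriting of the right-hand side as $\tfrac{m}{4^{m-1}}\,(\mu_+)_{m-1}(\mu_-)_{m-1}\big/\bigl((1-k_+)_{m-1}(1-k_-)_{m-1}\bigr)$ checks out (your displayed factorization of $\binom{n-2j}{2}$ has a typo --- the roots are $\tfrac{n}{2}$ and $\tfrac{n-1}{2}$ --- but the term ratio you derive is right). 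The genuine gap sits exactly where you place ``the main work'': no quadratic transformation is ever exhibited, and in fact none is used by the paper. The paper applies Sheppard's transformation, which holds for \emph{every} terminating ${}_3F_2(1)$ (Theorem A1), with $N=n/2-1$, $a=3/2-n/2$, $b=1-m$ chosen so that the new third upper parameter $a+b-N-d-e+1$ equals $2$; it is this unit shift, not the half-integer gap between $1-\tfrac n2$ and $\tfrac32-\tfrac n2$, that makes the transformed series Gosper-summable (certificate $R_2(k)$ in Section 3), yielding $\tfrac{6+4m-5n+n^2}{4m}$. Even then, matching with the right-hand side is not a one-line parameter comparison but a substantial combinatorial argument: the identities $q_j=q_{1-j}$, $p_j=q_{m-2j+1}$, $d_j=q_{m+n-2j}$ reduce everything to the product identity \eqref{eq:thm2identity}, which the paper proves by cases on the relative order of $n/2$, $m$, and $n$, with the odd-$n$ case handled separately in Appendix A.

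Moreover, your ``guaranteed fallback'' is not guaranteed. Zeilberger's algorithm requires the summand to be a hypergeometric term in the recurrence variable as well as in $k$, but here the shift ratio $t(m+1,k)/t(m,k)$ contains $\prod_{i=1}^{k}\frac{(i-k_+(m))(i-k_-(m))}{(i-k_+(m+1))(i-k_-(m+1))}$, which is not a rational function of $(m,k)$: pairing conjugates makes each factor polynomial in $m$, but its roots are not integer-linear in $(m,i)$, so the summand is not proper hypergeometric in $m$ (nor in $n$), and creative telescoping is not guaranteed to return any recurrence in $m$, let alone a first-order one. Note that the paper's telescoping (Gosper, in $k$) is applied only \emph{after} Sheppard's transformation, where hypergeometricity in $k$ suffices. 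Finally, your continuity treatment of the ``ignore zero factors'' convention is too quick: when some $q_{j_0}$ and some $d_{j_1}$ vanish simultaneously at an admissible $(m,n)$, the limit of a generic rational-function identity equals the zero-dropped value only if the vanishing factors have matching derivatives along the perturbation, which is not automatic. The paper sidesteps this by proving \eqref{eq:thm2identity} through a bijective matching of factors (each factor on one side literally equals one on the other via $q_j=q_{1-j}$), so dropped zeros cancel identically, while Lemmas A2 and A3 rule out zeros everywhere else. To repair your outline you would need (i) an actual transformation-plus-evaluation --- Sheppard followed by Gosper is the one that works --- and (ii) explicit bookkeeping of vanishing factors in place of the genericity appeal.
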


Similarly, by Theorem 1.4 of \cite{fang}, the expression on the left gives the probability of 
getting a successive vertex ordering of $L(K_{m,n}^{(1,2)})$, where $K_{m,n}^{(1,2)}$
is the $3$-uniform hypergraph on vertex set $V=V_1\cup V_2$ with $|V_1|=m$
and $|V_2|=n$, whose edges are all of the subsets $e\subseteq V$ such that
$|e\cap V_1|=1$ and $|e\cap V_2|=2$.

Recall that the \emph{generalized hypergeometric function} is defined by
\[\pFq{p}{q}{a_1,\dots,a_p}{b_1,\dots,b_q}{z}\coloneqq\sum_{k=0}^\infty\frac{(a_1)_k\dots(a_p)_k}{(b_1)_k\dots(b_q)_k}\frac{z^k}{k!},\]
where $(a)_k\coloneqq a(a+1)\dots(a+k-1)$ denotes the \emph{rising factorial} (with $(a)_0\coloneqq1$).
Notice that if one of the upper parameters is a nonpositive integer $-a$, then this sum has only $a+1$ terms,
since the rest of the terms vanish (we call such hypergeometric sums \emph{terminating}). 
In particular, if $b>a\ge0$ are integers and we have a ${}_3F_2(1)$ sum where one of the upper parameters 
is $-a$ and one of the lower parameters is $-b$, this sum is well-defined.
We will only be interested in the terminating ${}_3F_2(1)$ case.

\section{Proof of Theorem 1}\label{sec:theorem1}
Let $X_n\coloneqq\sqrt{1+6n-3n^2}$. Then
\[\begin{cases}
-\binom{n-3j}{3}&=-\frac{1}{6}(n-3j)(n-1-3j)(n-2-3j),\\
\binom{n}{3}-\binom{n-3j}{3}&=\frac{9}{2}j\bigl(j-\frac{1}{6}(3n-3+X_n)\bigr)\bigl(j-\frac{1}{6}(3n-3-X_n)\bigr);
\end{cases}\]
and so we have
\begin{align*}
&\sum_{i=0}^{\lfloor n/3\rfloor-1}\prod_{j=1}^i\frac{-\binom{n-3j}{3}}{\binom{n}{3}-\binom{n-3j}{3}} \\
&\quad=\sum_{i=0}^{\lfloor n/3\rfloor-1}\prod_{j=1}^i
\frac{-\frac{1}{6}(n-3j)(n-1-3j)(n-2-3j)}
{\frac{9}{2}j\bigl(j-\frac{1}{6}(3n-3+X_n)\bigr)\bigl(j-\frac{1}{6}(3n-3-X_n)\bigr)}\\
&\quad=\sum_{i=0}^{\lfloor n/3\rfloor-1}\prod_{j=1}^i
\frac{(-\frac{n}{3}+j)(\frac{1}{3}-\frac{n}{3}+j)(\frac{2}{3}-\frac{n}{3}+j)}
{j\bigl(j-\frac{1}{6}(3n-3+X_n)\bigr)\bigl(j-\frac{1}{6}(3n-3-X_n)\bigr)}\\
&\quad=\pFq{3}{2}{1-\frac{n}{3}, \frac{4}{3}-\frac{n}{3}, \frac{5}{3}-\frac{n}{3}}{\frac{1}{6}(9-3n+X_n), \frac{1}{6}(9-3n-X_n)}{1}
\eqqcolon F(1).
\end{align*}
Sheppard (Equation (18) in \cite{sheppard}; see Corollary 3.3.4 in \cite{specialfunctions} for a textbook treatment,
as well as \hyperref[appendixtheorem:A1]{Theorem A1} in \hyperref[sec:appendix]{Appendix A} for a self-contained proof) 
proved the identity
\[\pFq{3}{2}{-N,a,b}{d,e}{1}
=\frac{(d-a)_N(e-a)_N}{(d)_N(e)_N}
\pFq{3}{2}{-N,a,a+b-N-d-e+1}{a-N-d+1,a-N-e+1}{1},\]
whenever $N\ge0$ is a nonnegative integer and $a,b,d,e\in\CC$
are such that both sides converge.
In our sum $F(1)$, one of the upper parameters is a nonpositive integer, though exactly which parameter that is 
depends on the value of $n\bmod 3$. \emph{In the rest of this section, we will assume that $n\bmod3=0$},
as the other two cases are proven analogously (see \hyperref[sec:appendix]{Appendix A} for those cases). 
Set $N=n/3-1$, $a=4/3-n/3$, $b=5/3-n/3$,
$d=\frac{1}{6}(9-3n+X_n)$, and $e=\frac{1}{6}(9-3n-X_n)$ in
Sheppard's identity to get
\begin{align*}
F(1)&=\frac{\bigl(\frac{1}{6}(1-n+X_n)\bigr)_{n/3-1}
\bigl(\frac{1}{6}(1-n-X_n)\bigr)_{n/3-1}}
{\bigl(\frac{1}{6}(9-3n+X_n)\bigr)_{n/3-1}
\bigl(\frac{1}{6}(9-3n-X_n)\bigr)_{n/3-1}}\\
&\qquad\times\pFq{3}{2}{1-\frac{n}{3},\frac{4}{3}-\frac{n}{3},2}{\frac{1}{6}(11-n+X_n),\frac{1}{6}(11-n-X_n)}{1}.
\end{align*}
The denominators and lower parameters are never nonpositive integers,
since $X_n$ contributes a nonzero imaginary part, so we may proceed without worrying about division by zero. 
Now $(a)_k(b)_k=\prod_{j=0}^{k-1}(a+j)(b+j)$, so we may simplify
\begin{align*}
&\Bigl(\frac{1}{6}\bigl(1-n+X_n\bigr)\Bigr)_{n/3-1}
\Bigl(\frac{1}{6}\bigl(1-n-X_n\bigr)\Bigr)_{n/3-1} \\
&\quad=\prod_{j=0}^{n/3-2}\Bigl(j^2-\frac{n-1}{3}j+\frac{n^2-2n}{9}\Bigr)
\end{align*}
and
\begin{align*}
&\Bigl(\frac{1}{6}\bigl(9-3n+X_n\bigr)\Bigr)_{n/3-1}
\Bigl(\frac{1}{6}\bigl(9-3n-X_n\bigr)\Bigr)_{n/3-1} \\
&\quad=\prod_{j=0}^{n/3-2}\Bigl(j^2+(3-n)j+\frac{3n^2-15n+20}{9}\Bigr).
\end{align*}
As for the ${}_3F_2(1)$ sum on the right-hand side of Sheppard's identity,
we can use Gosper's algorithm to prove that
\[\pFq{3}{2}{1-\frac{n}{3},\frac{4}{3}-\frac{n}{3},2}{\frac{1}{6}(11-n+X_n),\frac{1}{6}(11-n-X_n)}{1}
=\frac{n^2-4n+6}{3n-6}.\] 
(See \hyperref[sec:gosper]{Section 3} for details.)
Putting it all together, our task is now to prove that
\[\frac{\prod_{j=0}^{n/3-2}(9j^2-3(n-1)j+n^2-2n)}{\prod_{j=0}^{n/3-2}(9j^2+9(3-n)j+3n^2-15n+20)}
\frac{n^2-4n+6}{3n-6}
=\frac{n/3}{\prod_{j=1}^{n/3-1}c_{3j}}
\frac{\prod_{j=n+1}^{\lfloor3n/2\rfloor-2}c_j}
{\prod_{j=n/3+1}^{\lfloor n/2\rfloor-1}c_{3j}}.\]
Since $c_{3j+3}=9j^2+9(3-n)j+3n^2-15n+20$,
the bottom left product is equal to the product $\prod_{j=1}^{n/3-1}c_{3j}$ on the bottom right. 
Thus it suffices for us to prove that
\begin{align*}
\prod_{j=0}^{n/3-2}\bigl(9j^2-3(n-1)j+n^2-2n\bigr)
\prod_{j=n/3+1}^{\lfloor n/2\rfloor-1}c_{3j}
=\frac{n(n-2)}{n^2-4n+6}\prod_{j=n+1}^{\lfloor3n/2\rfloor-2}c_j.
\end{align*}
Denote by $b_j\coloneqq9j^2-3(n-1)j+n^2-2n$ the $j$-th factor of the product on the left. 
Since $b_0/c_{n+1}=n(n-2)/(n^2-4n+6)$, we are left to prove that
\[\prod_{j=1}^{\lceil n/6\rceil-1}b_j\prod_{j=\lceil n/6\rceil}^{n/3-2}b_j\prod_{j=n/3+1}^{\lfloor n/2\rfloor-1}c_{3j}
=\prod_{j=n+2}^{\lfloor3n/2\rfloor-2}c_j.\]
Since $c_k=c_{3n-3-k}$, the result follows by using the fact that 
$b_j=c_{n-1+3j}$ in the first product and $b_j=c_{2n-2-3j}$ in the second product.
The idea is that the product on the right splits into the three products on the left, 
where each product on the left comprises only terms with the same index value modulo $3$.

\section{Proof of Theorem 2}\label{sec:theorem2}
The main ideas in this proof are very similar to those in the proof of \hyperref[theorem:1]{Theorem 1}, though the details get rather messy 
near the end. \emph{In this section, we will assume that $n$ is even}; the
reader is referred to \hyperref[sec:appendix]{Appendix A} for the (similar) odd case.
Let $Y_{m,n}\coloneqq\sqrt{(2m+1)^2-8mn}$. 
Since $-(m-j)\binom{n-2j}{2}=2(j-\frac{n}{2})(j+\frac{1}{2}-\frac{n}{2})(j-m)$
and $m\binom{n}{2}-(m-j)\binom{n-2j}{2}
=2j\bigl(j+\frac{1}{4}(1-2m-2n+Y_{m,n})\bigr)\bigl(j+\frac{1}{4}(1-2m-2n-Y_{m,n})\bigr)$,
we have
\begin{align*}
&\sum_{i=0}^{\min\{m,\lfloor n/2\rfloor\}}\prod_{j=1}^i\frac{-(m-j)\binom{n-2j}{2}}{m\binom{n}{2}-(m-j)\binom{n-2j}{2}} \\
&\quad=\pFq{3}{2}{1-\frac{n}{2},\frac{3}{2}-\frac{n}{2},1-m}
{\frac{1}{4}(5-2m-2n+Y_{m\normalcomma n}), 
\frac{1}{4}(5-2m-2n-Y_{m\normalcomma n})}{1}\\
&\quad=\frac{\bigl(\frac{1}{4}(-1-2m+Y_{m,n})\bigr)_{n/2-1}
\bigl(\frac{1}{4}(-1-2m-Y_{m,n})\bigr)_{n/2-1}}
{\bigl(\frac{1}{4}(5-2m-2n+Y_{m,n})\bigr)_{n/2-1}
\bigl(\frac{1}{4}(5-2m-2n-Y_{m,n})\bigr)_{n/2-1}}\\
&\qquad\times\pFq{3}{2}{1-\frac{n}{2},\frac{3}{2}-\frac{n}{2},2}{\frac{1}{4}(9+2m-2n+Y_{m\normalcomma n}), \frac{1}{4}(9+2m-2n-Y_{m\normalcomma n})}{1},
\end{align*}
where we have applied Sheppard's identity with $N=n/2-1$, $a=3/2-n/2$, $b=1-m$, 
$d=\frac{1}{4}(5-2m-2n+Y_{m,n})$, and $e=\frac{1}{4}(5-2m-2n-Y_{m,n})$.
Now the lower parameters of the resulting ${}_3F_2(1)$ sum are never nonpositive integers,
and while the numerators and denominators of the fractions involved can have factors that 
are negative integers, they never include a zero factor
(see \hyperref[lemma:A2]{Lemmas A2} and \hyperref[lemma:A3]{A3} 
in \hyperref[sec:appendix]{Appendix A} for proofs of these facts).
Thus we need not worry about division by zero in this case either.

The expression above simplifies to
\[\frac{\prod_{j=0}^{n/2-2}\bigl(j^2-\frac{2m+1}{2}j+\frac{mn}{2}\bigr)}
{\prod_{j=0}^{n/2-2}\bigl(j^2+\frac{5-2m-2n}{2}j+\frac{6-6m-5n+4mn+n^2}{4}\bigr)}
\frac{6+4m-5n+n^2}{4m}.
\]
(Once again, see \hyperref[sec:gosper]{Section 3} for details on evaluating this ${}_3F_2(1)$ sum
using Gosper's algorithm.)
We must prove that this expression is equal to 
\[m\prod_{j=1}^{m-1}\frac{mn-\binom{m+1}{2}+\binom{j}{2}}{d_j},\]
where $d_j\coloneqq\frac{1}{j}(m\binom{n}{2}-(m-j)\binom{n-2j}{2})$.
Since $d_{j+1}=2j^2+(5-2m-2n)j+\frac{1}{2}(6-6m-5n+4mn+n^2)$, our task is to prove that
\begin{equation}\label{eq:sus}
\frac{\prod_{j=0}^{n/2-2}p_j}{\prod_{j=0}^{n/2-2}d_{j+1}}
=\frac{4m^2}{6+4m-5n+n^2}\frac{\prod_{j=1}^{m-1}q_j}{\prod_{j=1}^{m-1}d_j},
\end{equation}
where we have defined $p_j\coloneqq 2j^2-(2m+1)j+mn$
and $q_j\coloneqq mn-\binom{m+1}{2}+\binom{j}{2}$.
\emph{The products in this section are to be interpreted as ignoring zero factors.}
(Formally, we could define such a product by viewing $\prod_j a_j$
as shorthand for $\prod_j (a_j+[a_j=0])$,
where $[P]$ denotes the \emph{Iverson bracket}, which is equal to $1$ if the proposition
$P$ is true, and is equal to $0$ if $P$ is false.)
Note that in \eqref{eq:sus}, only the two products on the right may contain zero factors, 
as mentioned earlier in our discussion concerning division by zero.

Since we have the identities
\begin{equation} \label{eq:refl}
q_j=q_{1-j},
\end{equation}
\begin{equation*}
p_j=q_{m-2j+1}=q_{2j-m},
\end{equation*}
\begin{equation*}
d_j=p_{m+n/2-j}=q_{m+n-2j},
\end{equation*}
and
\begin{equation*}
\frac{4m^2}{6+4m-5n+n^2}
=\frac{q_mq_{m+1}}{q_{m-n+1}q_{m-n+3}},
\end{equation*}
which are easily checked by direct calculation, we are left to prove the following:
\begin{equation}\label{eq:thm2identity}
\prod_{j=0}^{n/2}q_{m-2j+1}
\prod_{j=1}^{m-1}q_{m+n-2j}
=\prod_{j=1}^{m+1}q_j
\prod_{j=1}^{n/2-1}q_{m+n-2j}.
\end{equation}
To this end, we consider three cases, depending on the relative ordering of $n/2$, $m$, and $n$.

\textbf{Case 1} ($n/2<m$). We may divide both sides of \eqref{eq:thm2identity}
by $\prod_{j=1}^{n/2-1}q_{m+n-2j}$, so our task is to prove that
\begin{equation} \label{eq:thm2case1}
\prod_{j=0}^{n/2}q_{m-2j+1}
\prod_{j=n/2}^{m-1}q_{m+n-2j}
=\prod_{j=1}^{m+1}q_j.
\end{equation} 

\textbf{Case 1A} ($n\le m$). We simplify the second product on the left of \eqref{eq:thm2case1},
using \eqref{eq:refl}:
\begin{align*}
\prod_{j=n/2}^{m-1}q_{m+n-2j}
&=\prod_{j=n/2}^{n/2+\lceil m/2\rceil-1}q_{m+n-2j}
\prod_{j=n/2+\lceil m/2\rceil}^{m-1}q_{m+n-2j}\\
&=\prod_{j=0}^{\lceil m/2\rceil-1}q_{m-2j}
\prod_{j=n/2+\lceil m/2\rceil}^{m-1}q_{1-m-n+2j}\\
&=\prod_{j=0}^{\lfloor m/2\rfloor-n/2-1}q_{m-n-1-2j}
\prod_{j=0}^{\lceil m/2\rceil-1}q_{m-2j}.
\end{align*}
The left-hand side of \eqref{eq:thm2case1} then becomes
\begin{align*}
&\prod_{j=0}^{n/2}q_{m-2j+1}
\prod_{j=0}^{\lfloor m/2\rfloor-n/2-1}q_{m-n-1-2j}\prod_{j=0}^{\lceil m/2\rceil-1}q_{m-2j}\\
&\quad=\prod_{j=0}^{\lfloor m/2\rfloor}q_{m-2j+1}\prod_{j=0}^{\lceil m/2\rceil-1}q_{m-2j}\\
&\quad=\prod_{j=1}^{m+1}q_j,
\end{align*}
which completes the proof of case 1A.

\textbf{Case 1B} ($n>m$). We simplify the first product on the left of \eqref{eq:thm2case1},
using \eqref{eq:refl}:
\begin{align*}
\prod_{j=0}^{n/2}q_{m-2j+1}
&=\prod_{j=0}^{\lfloor m/2\rfloor}q_{m-2j+1}
\prod_{j=\lfloor m/2\rfloor+1}^{n/2}q_{m-2j+1}\\
&=\prod_{j=0}^{\lfloor m/2\rfloor}q_{m-2j+1}
\prod_{j=\lfloor m/2\rfloor+1}^{n/2}q_{2j-m}\\
&=\prod_{j=0}^{\lfloor m/2\rfloor}q_{m-2j+1}
\prod_{j=0}^{n/2-\lfloor m/2\rfloor-1}q_{n-m-2j}.
\end{align*}
The left-hand side of \eqref{eq:thm2case1} then becomes
\begin{align*}
&\prod_{j=0}^{\lfloor m/2\rfloor}q_{m-2j+1}
\prod_{j=n/2}^{m-1}q_{m+n-2j}
\prod_{j=0}^{n/2-\lfloor m/2\rfloor-1}q_{n-m-2j}\\
&\quad=\prod_{j=0}^{\lfloor m/2\rfloor}q_{m-2j+1}
\prod_{j=0}^{\lceil m/2\rceil-1} q_{m-2j}\\
&\quad=\prod_{j=1}^{m+1}q_j.
\end{align*}
This completes the proof of case 1B, and consequently case 1.

\textbf{Case 2} ($n/2\ge m$). We may divide both sides by 
$\prod_{j=1}^{m-1}q_{m+n-2j}$, so our task is to prove that
\[\prod_{j=0}^{n/2}q_{m-2j+1}
=\prod_{j=1}^{m+1}q_j\prod_{j=m}^{n/2-1}q_{m+n-2j}.\]
Split the left product into three parts:
\[\prod_{j=0}^{n/2}q_{m-2j+1}=
\prod_{j=0}^{\lceil(m-1)/2\rceil}q_{m-2j+1}
\prod_{j=\lceil(m+1)/2\rceil}^{m}q_{m-2j+1}
\prod_{j=m+1}^{n/2}q_{m-2j+1}.\]
Then \eqref{eq:refl} implies that
\[\prod_{j=m+1}^{n/2}q_{m-2j+1}=\prod_{j=m}^{n/2-1}q_{m+n-2j},\]
so it remains to be shown that
\[\prod_{j=0}^{\lceil(m-1)/2\rceil}q_{m-2j+1}
\prod_{j=\lceil(m+1)/2\rceil}^{m}q_{m-2j+1}
=\prod_{j=1}^{m+1}q_j.\]
Applying \eqref{eq:refl} to the second product on the left, we get
\begin{align*}
\prod_{j=0}^{\lceil(m-1)/2\rceil}q_{m-2j+1}
\prod_{j=\lceil(m+1)/2\rceil}^{m}q_{m-2j+1}
&=\prod_{j=0}^{\lceil(m-1)/2\rceil}q_{m-2j+1}
\prod_{j=0}^{\lfloor(m-1)/2\rfloor}q_{m-2j}\\
&=\prod_{j=1}^{m+1}q_j.
\end{align*}
This completes the proof of Theorem 2.

\section{Using Gosper's algorithm}\label{sec:gosper}

Here we give proofs of the identities
\[\pFq{3}{2}{1-\frac{n}{3},\frac{4}{3}-\frac{n}{3},2}
{\frac{1}{6}(11-n+X_n),\frac{1}{6}(11-n-X_n)}{1}
=\frac{n^2-4n+6}{3n-6}\]
and
\[\pFq{3}{2}{1-\frac{n}{2},\frac{3}{2}-\frac{n}{2},2}
{\frac{1}{4}(9+2m-2n+Y_{m\normalcomma n}), \frac{1}{4}(9+2m-2n-Y_{m\normalcomma n})}{1}
=\frac{6+4m-5n+n^2}{4m}\]
that were obtained using Gosper's algorithm.
Since an understanding of how Gosper's algorithm
works is not required for following the proof below,
the reader is referred to \cite{AeqB} for such details.
We can write the first sum as $\sum_{k=0}^{n/3-1} t_k$, where
\[t_k=\frac{(1-\frac{n}{3})_k(\frac{4}{3}-\frac{n}{3})_k(2)_k}
{\bigl(\frac{1}{6}(11-n+X_n)\bigr)_k\bigl(\frac{1}{6}(11-n-X_n)\bigr)_k}\frac{1}{k!}.\]
Gosper's algorithm finds a rational function $R(k)$ such that
$t_k=R(k+1)t_{k+1}-R(k)t_k$, so that the sum telescopes, giving
\[\sum_{k=0}^{n/3-1} t_k
=R(n/3)t_{n/3}-R(0)t_0.\]
Verifying such a proof amounts to computing the rational function
$\frac{1}{t_k}(R(k+1)t_{k+1}-R(k)t_k)$ and checking that it is equal to one, 
then computing and simplifying the expression $R(n/3)t_{n/3}-R(0)t_0$. 
It is routine to do this using computer algebra systems such as Mathematica 
(though it can also be done by hand). 
The author obtained these identities using 
Paule and Schorn's Mathematica package \texttt{fastZeil} \cite{fastZeil};
see \hyperref[sec:mathematica]{Appendix B} for a guide on how
to follow the computations in this paper using Mathematica.

For the first identity, we have
\[R_1(k)=-\frac{6+9k^2-3k(n-5)+n(n-4)}{3(k+1)(n-2)}.\]

Similarly, we can write the second sum as $\sum_{k=0}^{n/2}u_k$,
where
\[u_k=\frac{(1-\frac{n}{2})_k(\frac{3}{2}-\frac{n}{2})_k(2)_k}
{\bigl(\frac{1}{4}(9+2m-2n+Y_{m,n})\bigr)_k\bigl(\frac{1}{4}(9+2m-2n-Y_{m,n})\bigr)_k}\frac{1}{k!};\]
the proof then follows from setting
\[R_2(k)=-\frac{2(1+k)(3+2k+2m)-(5+4k)n+n^2}{4(1+k)m},\]
so that
\[\sum_{k=0}^{n/2}u_k=R_2(n/2+1)u_{n/2+1}-R_2(0)u_0.\]

\section*{Acknowledgements}
The author thanks Hao Huang for his helpful comments on a draft of this paper,
as well as Zhi Tat Ang for numerous stimulating discussions.
The author would also like to express his gratitude to Aryan Jain for his warm 
hospitality during a brief night's stay, where a rejuvenating beverage provided 
invaluable inspiration while working on the proof of Theorem 2.

\newpage
\section*{Appendix A: Technical lemmas \& calculations for similar cases}\label{sec:appendix}
For convenience, we reproduce Sheppard's proof from \cite{sheppard} in our notation.
\renewcommand{\thelemma}{A\arabic{lemma}}
\stepcounter{lemma}
\renewcommand{\theappendixtheorem}{A\arabic{appendixtheorem}}
\begin{appendixtheorem}[Sheppard \cite{sheppard}, Equation (18)]
\label{appendixtheorem:A1}
Suppose $N\ge0$ is a nonnegative integer, and $a,b,d,e\in\CC$. Then
\[\pFq{3}{2}{-N,a,b}{d,e}{1}
=\frac{(d-a)_N(e-a)_N}{(d)_N(e)_N}
\pFq{3}{2}{-N,a,a+b-N-d-e+1}{a-N-d+1,a-N-e+1}{1},\]
whenever both sides converge.
\end{appendixtheorem}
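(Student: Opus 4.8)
The plan is to prove the identity for generic parameters and then extend to all of $\CC$ by a rationality argument. First I would fix the nonnegative integer $N$ and note that each side is a rational function of $(a,b,d,e)$: the left-hand side is a sum of $N+1$ terms, and the right-hand side is such a sum multiplied by the explicit Pochhammer quotient $(d-a)_N(e-a)_N/\bigl((d)_N(e)_N\bigr)$. Since two rational functions agreeing on a nonempty open set agree identically, it suffices to prove the identity on any convenient open region of parameter space; in particular I may assume $\Re e>\Re b>0$ (and whatever further strict inequalities I need for absolute convergence of the integrals below), recovering the full statement by analytic continuation.

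With convergence in hand, I would run the classical integral engine behind the Thomae relations (this is the route taken in Corollary 3.3.4 of \cite{specialfunctions}). Using the Euler beta integral $(b)_k/(e)_k=\frac{\Gamma(e)}{\Gamma(b)\Gamma(e-b)}\int_0^1 t^{b+k-1}(1-t)^{e-b-1}\,dt$ and summing over $k$, the left-hand side becomes $\frac{\Gamma(e)}{\Gamma(b)\Gamma(e-b)}\int_0^1 t^{b-1}(1-t)^{e-b-1}\,{}_2F_1(-N,a;d;t)\,dt$, where the inner Gauss series is a polynomial of degree $N$ in $t$ thanks to the factor $(-N)_k$. I would then apply the Pfaff transformation that preserves the terminating parameter, namely ${}_2F_1(-N,a;d;t)=(1-t)^{N}\,{}_2F_1(-N,d-a;d;\tfrac{t}{t-1})$, expand the transformed series as a finite sum, interchange sum and integral, and evaluate each beta integral. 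The reflection identity $(x)_{N-j}=(-1)^j(x)_N/(1-x-N)_j$ applied to the factor $(e-b)_{N-j}$ then collapses the Gamma quotients into a single prefactor, yielding a clean two-term transformation of the stated type.

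The hard part is that one pass through this engine does not land on the exact target: preserving $-N$ while transforming the inner ${}_2F_1$ produces a relation that keeps $-N$ and $b$ and replaces $a$ by $d-a$ (with only one of the two lower parameters altered), whereas Sheppard's identity keeps $-N$ and $a$, replaces $b$ by $a+b-N-d-e+1$, and alters both lower parameters to $a-N-d+1$ and $a-N-e+1$. These are different images under the Thomae symmetry group of the terminating ${}_3F_2(1)$, so I expect to reach the stated form either by composing the basic transformation with the trivial permutations of the upper parameters (and a reversal $k\mapsto N-k$, again controlled by the reflection formula), or --- more directly --- by instead expanding one Pochhammer factor into a second finite sum, interchanging the order of summation, and recognizing the resulting inner sum as a Pfaff--Saalsch\"utz sum, which collapses in a single step. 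Either way, the only genuinely delicate bookkeeping is verifying that the accumulated prefactor simplifies to exactly $(d-a)_N(e-a)_N/\bigl((d)_N(e)_N\bigr)$ and that the new parameters come out precisely as stated; everything else is routine Pochhammer algebra licensed by the rationality reduction of the first step. I would guard against sign and index errors by checking the cases $N=0$ (where both sides are $1$) and $N=1$ (where both sides equal $1-ab/(de)$) directly.
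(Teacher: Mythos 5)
Your plan is correct in outline, but it takes a genuinely different route from the proof in the paper's Appendix A, which reproduces Sheppard's original argument. The paper stays entirely within finite sums and polynomial algebra: it first proves, via a telescoping identity established by induction on $s$, the closed-form evaluation ${}_3F_2(-N,a,-a+N+d+e-1;d,e;1)=(d-a)_N(e-a)_N/\bigl((d)_N(e)_N\bigr)$ --- which is precisely the Pfaff--Saalsch\"utz theorem, here proved from scratch --- and then clears denominators, viewing the two sides as polynomials $\Phi(b)$ and $\Psi(b)$; a contiguous relation in $b$ together with induction on $N$ shows that $f_N=\Phi-\Psi$ satisfies $f_N(b)=f_N(b+1)=\cdots$, hence is a constant polynomial in $b$, and evaluating at the Saalsch\"utzian point $b=-a+N+d+e-1$, where $\Psi$ collapses to a single term, shows the constant is zero. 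Your plan instead runs the analytic engine: a rationality/continuation reduction (sound, and indeed the right way to dispose of the clause ``whenever both sides converge'' for complex parameters), then the Euler beta integral plus Pfaff, which, as you correctly anticipate, yields only the one-sided relation ${}_3F_2(-N,a,b;d,e;1)=\frac{(e-b)_N}{(e)_N}\,{}_3F_2(-N,b,d-a;d,b-e-N+1;1)$, after which Sheppard's form must be reached by composing such relations with parameter permutations, or, as in your alternative, by a double-sum interchange against a Pfaff--Saalsch\"utz kernel. Both of your proposed completions are standard and do work --- this is essentially the textbook derivation the paper itself cites (Corollary 3.3.4 of \cite{specialfunctions}) --- though the composition and the verification that the accumulated prefactor equals $(d-a)_N(e-a)_N/\bigl((d)_N(e)_N\bigr)$ is the one piece you leave unexecuted, and your $N=0$ and $N=1$ checks are correct and a sensible guard. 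What each approach buys: yours situates the identity inside the transformation theory of terminating ${}_3F_2(1)$ series and reuses classical machinery, at the cost of needing a convergence region and continuation; the paper's is elementary and self-contained, requiring no integrals and no appeal to Saalsch\"utz as a known result --- indeed its equation \eqref{eqn:hill} \emph{is} that theorem, so your ``more direct'' alternative shares its kernel with the paper, with the polynomial-constancy argument in $b$ playing the role of your interchange of summations.
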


\begin{proof}
By induction on $s$ with the base case $s=1$ given by
\begin{align*}
&1+\frac{(-N)(a)(-a+N+d+e-1)}{de}-\frac{(-a+N+d-1)(-a+N+e-1)}{de}\\
&\quad=\frac{(1-N)(a+1)(-a+N+d+e-1)}{de},
\end{align*}
we have
\begin{align*}
&\sum_{k=0}^s\frac{(-N)_k(a)_k(-a+N+d+e-1)_k}{(d)_k(e)_k\;k!}\\
&\qquad-\frac{(-a+N+d-1)(-a+N+e-1)}{de}\\
&\qquad\quad\times\sum_{k=0}^{s-1}
\frac{(1-N)_k(a+1)_k(-a+N+d+e-1)_k}{(d+1)_k(e+1)_k\;k!}\\
&\quad=\frac{(1-N)_s(a+1)_s(-a+N+d+e-1)_s}{(d)_s(e)_s\;s!}
\end{align*}
for $s\ge1$; applying this identity with $s=N$ gives
\begin{equation} \label{eqn:magic}
\begin{aligned}
&\pFq{3}{2}{-N,a,-a+N+d+e-1}{d,e}{1}\\
&\quad=\frac{(-a+N+d-1)(-a+N+e-1)}{de}\\
&\quad\qquad\times\pFq{3}{2}{1-N,a+1,-a+N+d+e+1}{d+1,e+1}{1}.
\end{aligned}
\end{equation}
Repeated applications of \eqref{eqn:magic} with $s=N,\dots,1$ then yield
\begin{equation} \label{eqn:hill}
\pFq{3}{2}{-N,a,-a+N+d+e-1}{d,e}{1}
=\frac{(d-a)_N(e-a)_N}{(d)_N(e)_N}.
\end{equation}
Now, let
\begin{equation*}
\begin{aligned}
\Phi(N,a,b,d,e)
&\coloneqq(d)_N(e)_N\;\pFq{3}{2}{-N,a,b}{d,e}{1}\\
&=\sum_{k=0}^N\frac{1}{k!}(-N)_k(a)_k(b)_k(d+k)_{N-k}(e+k)_{N-k},
\end{aligned}
\end{equation*}
\begin{equation} 
\begin{aligned} \label{eqn:vanish}
&\Psi(N,a,b,d,e)
\coloneqq(d-a)_N(e-a)_N\;\pFq{3}{2}{-N,a,a+b-N-d-e+1}{a-N-d+1,a-N-e+1}{1}\\
&\qquad=\sum_{k=0}^N\frac{1}{k!}(-N)_k(a)_k(a+b-N-d-e+1)_k(d-a)_{N-k}(e-a)_{N-k},
\end{aligned}
\end{equation}
and
\begin{equation*}
f_N(b)\coloneqq\Phi(N,a,b,d,e)-\Psi(N,a,b,d,e).
\end{equation*}
It suffices to prove that $f_N=0$ for $N\ge0$;
we will induct on $N$. Clearly $f_0=0$, so suppose that
$f_j=0$ for $0\le j<N$. Then
\[\Phi(N,a,b+1,d,e)-\Phi(N,a,b,d,e)
=-Na\,\Phi(N-1,a+1,b+1,d+1,e+1)\]
and
\[\Psi(N,a,b+1,d,e)-\Psi(N,a,b,d,e)
=-Na\,\Psi(N-1,a+1,b+1,d+1,e+1),\]
which implies that $f_N(b)=f_N(b+1)=f_N(b+2)=\cdots$.
Thus $f_N$ is a constant, since it is a polynomial.
Choosing $b=-a+N+d+e-1$, so that $a+b-N-d-e+1=0$,
we see that $\Psi$ reduces to the first term in the sum \eqref{eqn:vanish},
so that $\Psi=(d-a)_N(e-a)_N$ for this choice of $b$.
It now follows from \eqref{eqn:hill} that $\Phi=\Psi$ for this $b$, 
so that $f_N(b)=0$ and consequently $f_N=0$. 
This closes the induction and completes the proof.
\end{proof}

Now we prove some auxiliary results for \hyperref[sec:theorem2]{Section 2}
that ensure that we never have to worry about dividing by zero.
Recall that $Y_{m,n}\coloneqq\sqrt{(2m+1)^2-8mn}$.

\begin{lemma} \label{lemma:A2}
The terms $\frac{1}{4}(9+2m-2n\pm Y_{m,n})$ are never equal to nonpositive integers.
\end{lemma}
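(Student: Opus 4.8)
The plan is to argue by cases on the sign of the radicand $(2m+1)^2-8mn$ appearing under the square root defining $Y_{m,n}$. If $(2m+1)^2-8mn<0$, then $Y_{m,n}$ is a nonzero purely imaginary number, so each of $\frac14(9+2m-2n\pm Y_{m,n})$ has nonzero imaginary part and hence cannot be a (real) nonpositive integer. This is exactly the sort of argument used in the proof of Theorem~\ref{theorem:1}, where the nonzero imaginary part of $X_n$ was invoked.

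It remains to treat the case $(2m+1)^2-8mn\ge0$, in which $Y_{m,n}$ is a nonnegative real number. Here I would prove the stronger claim that both terms are \emph{strictly positive}; since a positive real is never a nonpositive integer, this suffices, and it conveniently sidesteps any worry about whether the terms happen to be integers. As $\frac14(9+2m-2n+Y_{m,n})\ge\frac14(9+2m-2n-Y_{m,n})$, it is enough to show $9+2m-2n-Y_{m,n}>0$, i.e.\ $9+2m-2n>Y_{m,n}$.

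To establish this, I would first note that $9+2m-2n>0$: the hypothesis $8mn\le(2m+1)^2$ gives $2n\le m+1+\frac{1}{4m}$, so that $9+2m-2n\ge 8+m-\frac{1}{4m}>0$ for all $m\ge1$. With both sides nonnegative, the inequality $9+2m-2n>Y_{m,n}$ is equivalent to its square $(9+2m-2n)^2>(2m+1)^2-8mn$. Expanding, the difference of the two sides collapses to $4\bigl((n-4)(n-5)+8m\bigr)$, which is strictly positive because $(n-4)(n-5)\ge0$ for every integer $n$ and $8m>0$. I do not expect any real obstacle beyond this routine algebraic simplification; the one point to be careful about is the boundary between the two cases (the perfect-square subcase, where the terms could a priori be integers), but the positivity argument applies uniformly to the entire real case and so dispenses with it.
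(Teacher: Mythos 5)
Your proof is correct and takes essentially the same approach as the paper: split on the sign of the radicand $(2m+1)^2-8mn$, dispose of the negative case via the nonzero imaginary part, and handle the real case by squaring an inequality between $9+2m-2n$ and $Y_{m,n}$. The only difference is in which inequality gets squared: the paper proves the sharper bound $\tfrac14(9+2m-2n-Y_{m,n})\ge2$ via $2(m-n)+1\ge Y_{m,n}$ (reducing to $4n(n-1)\ge0$, using $m\ge2n-1$ in the real case), whereas you prove strict positivity via $(9+2m-2n)^2-Y_{m,n}^2=4\bigl((n-4)(n-5)+8m\bigr)>0$, which suffices for the lemma as stated.
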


\begin{proof}
If $m<2n-1$, then $(2m+1)^2-8mn<0$, so the terms have a nonzero imaginary part.
If $m\ge 2n-1$, then $(2m+1)^2-8mn>0$, and so $Y_{m,n}>0$. Thus it suffices to prove that
$\frac{1}{4}(9+2m-2n-Y_{m,n})\ge2$, or $2(m-n)+1\ge Y_{m,n}$, which reduces to
$4n(n-1)\ge0$.
\end{proof}

The following lemma guarantees that the rising factorials in the denominator of
the expression obtained from applying Sheppard's identity in \hyperref[sec:theorem2]{Section 2}
never contain zero factors.
\begin{lemma} \label{lemma:A3}
The terms $\frac{1}{4}(5-2m-2n\pm Y_{m,n})$ 
and $\frac{1}{4}(-1-2m\pm Y_{m,n})$
are never nonpositive integers in the range
$\{2-\frac{n}{2}, 3-\frac{n}{2}, \dots, 0\}$.
\end{lemma}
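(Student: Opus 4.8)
The plan is to reduce the lemma to a pair of elementary inequalities. First I would make explicit why the stated range is the relevant one: the rising factorial $(x)_{n/2-1}$ occurring in the numerators and denominators of the expression in Section 2 vanishes precisely when its starting value $x$ is one of the nonpositive integers $0,-1,\dots,-(n/2-2)$, i.e.\ when $x\in\{2-\tfrac n2,3-\tfrac n2,\dots,0\}$. So it suffices to show that none of the four numbers $A_\pm\coloneqq\frac14(5-2m-2n\pm Y_{m,n})$ and $B_\pm\coloneqq\frac14(-1-2m\pm Y_{m,n})$ lands in this set. In fact I would prove the stronger statement that each is \emph{strictly smaller} than $2-\tfrac n2$, the least element of the range; this avoids having to reason separately about integrality.

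As in \hyperref[lemma:A2]{Lemma A2}, the argument splits on whether $Y_{m,n}$ is real. If $m<2n-1$ then $(2m+1)^2-8mn<0$, so $Y_{m,n}$ is purely imaginary, all four terms acquire a nonzero imaginary part, and none can lie in the (real) range. I may therefore assume $m\ge2n-1$, in which case $Y_{m,n}$ is real with $0<Y_{m,n}<2m+1$, the upper bound holding because $8mn>0$. In this regime it is enough to bound the two ``plus'' terms, since $A_-<A_+$ and $B_-<B_+$ follow automatically. For $A_+$ the bound $Y_{m,n}<2m+1$ gives at once $A_+<\frac14(6-2n)=\frac{3-n}{2}<2-\tfrac n2$.

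For $B_+$ the desired inequality $B_+<2-\tfrac n2$ is equivalent to $Y_{m,n}<2m+9-2n$, and since $m\ge2n-1$ forces $2m+9-2n\ge2n+7>0$, I may square both sides and reduce to checking $(2m+9-2n)^2-Y_{m,n}^2=32m+4n^2-36n+80>0$, which is clear in the real regime (it is then at least $4n^2+28n+48$). Combining the two estimates places all four terms strictly below $2-\tfrac n2$, which proves the lemma. The only step demanding any care is this last one: squaring is legitimate only because the regime $m\ge2n-1$ guarantees that $2m+9-2n$ is positive, so I would be careful to invoke the dichotomy from \hyperref[lemma:A2]{Lemma A2} before squaring rather than treating $Y_{m,n}<2m+9-2n$ as self-evident. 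Everything else is routine polynomial algebra.
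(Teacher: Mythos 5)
Your proof is correct and takes essentially the same approach as the paper: the same dichotomy from Lemma A2 ($m<2n-1$ gives a nonzero imaginary part; $m\ge2n-1$ gives $Y_{m,n}>0$ real), followed by placing all four terms strictly below the bottom of the range by comparing $Y_{m,n}$ with a positive linear expression and squaring. The only cosmetic difference is that the paper reuses the single inequality $2(n-m)-1\le-Y_{m,n}$ (already established in Lemma A2) for both families of terms, whereas you bound the first family by $Y_{m,n}<2m+1$ and the second by a fresh squaring computation $Y_{m,n}<2m+9-2n$; both computations check out.
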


\begin{proof}
If $m<2n-1$, all the terms have a nonzero imaginary part.
If $m\ge2n-1$, then $Y_{m,n}>0$. For the terms
$\frac{1}{4}(5-2m-2n\pm Y_{m,n})$, it suffices to prove that
$\frac{1}{4}(5-2m-2n+Y_{m,n})\le\frac{3}{2}-n$, which reduces to
$2(n-m)-1\le-Y_{m,n}$, an inequality established in the proof of \hyperref[lemma:A2]{Lemma A2}.

Similarly, when $m\ge2n-1$, we can prove that $\frac{1}{4}(-1-2m\pm Y_{m,n})\le-\frac{n}{2}$,
since it also reduces to $2(n-m)-1\le-Y_{m,n}$.
\end{proof}

Now we give some details on the cases $n\equiv1,2\pmod3$ for \hyperref[theorem:1]{Theorem 1}
and $n\equiv1\pmod2$ for \hyperref[theorem:2]{Theorem 2}.

When $n\equiv1\pmod3$ in \hyperref[theorem:1]{Theorem 1}, we apply Sheppard's identity with
$N=n/3-4/3$, $a=1-n/3$, $b=5/3-n/3$, $d=\frac{1}{6}(9-3n+X_n)$, $e=\frac{1}{6}(9-3n-X_n)$
to get
\begin{align*}
&\frac{\bigl(\frac{1}{6}(3-n+X_n)\bigr)_{n/3-4/3}
\bigl(\frac{1}{6}(3-n-X_n)\bigr)_{n/3-4/3}}
{\bigl(\frac{1}{6}(9-3n+X_n)\bigr)_{n/3-4/3}
\bigl(\frac{1}{6}(9-3n-X_n)\bigr)_{n/3-4/3}}\\
&\quad\times\pFq{3}{2}{\frac{4}{3}-\frac{n}{3}, 1-\frac{n}{3}, 2}
{\frac{1}{6}(11-n+X_n), \frac{1}{6}(11-n-X_n)}{1}\\
&\quad=\frac{\prod_{j=0}^{n/3-7/3}\bigl(j^2+\frac{3-n}{3}j+\frac{2-3n+n^2}{9}\bigr)}
{\prod_{j=0}^{n/3-7/3}\bigl(j^2+(3-n)j+\frac{3n^2-15n+20}{9}\bigr)}
\frac{n^2-4n+6}{3n-6}.
\end{align*}
(Notice that we obtain the exact same ${}_3F_2(1)$ sum as in \hyperref[sec:theorem1]{Section 1},
since the ordering of parameters does not change a hypergeometric series.) From here the task is
then to prove that this expression is equal to
\[\frac{\lfloor n/3\rfloor}{\prod_{j=1}^{\lfloor n/3\rfloor-1}c_{3j}}
\frac{\prod_{j=n+1}^{\lfloor3n/2\rfloor-2}c_j}{\prod_{j=\lfloor n/3\rfloor+1}^{\lfloor n/2\rfloor-1}c_{3j}},\]
which proceeds in essentially the same way as before, except now things are slightly messier because of the
additional floor functions.

When $n\equiv2\pmod3$ in \hyperref[theorem:1]{Theorem 1}, we apply Sheppard's identity with
$N=n/3-5/3$, $a=1-n/3$, $b=4/3-n/3$, $d=\frac{1}{6}(9-3n+X_n)$, $e=\frac{1}{6}(9-3n-X_n)$
to obtain
\begin{align*}
&\frac{\bigl(\frac{1}{6}(3-n+X_n)\bigr)_{n/3-5/3}
\bigl(\frac{1}{6}(3-n-X_n)\bigr)_{n/3-5/3}}
{\bigl(\frac{1}{6}(9-3n+X_n)\bigr)_{n/3-5/3}
\bigl(\frac{1}{6}(9-3n-X_n)\bigr)_{n/3-5/3}}\\
&\quad\times\pFq{3}{2}{\frac{5}{3}-\frac{n}{3}, 1-\frac{n}{3}, 2}
{\frac{1}{6}(13-n+X_n), \frac{1}{6}(13-n-X_n)}{1}\\
&\quad=\frac{\prod_{j=0}^{n/3-8/3}\bigl(j^2+\frac{3-n}{3}j+\frac{n^2-3n+2}{9}\bigr)}
{\prod_{j=0}^{n/3-8/3}\bigl(j^2+(3-n)j+\frac{3n^2-15n+20}{9}\bigr)}
\frac{n^2-5n+12}{3n-3}.
\end{align*}
Notice that the fraction with two products on the left is identical to the one that
arises in the $n\equiv1\pmod3$ case, but the hypergeometric series is no longer the same!
Nevertheless, the sum succumbs to Gosper's algorithm, with
\[R(k)=-\frac{12+9k^2-3k(n-7)+n(n-5)}{3(1+k)(n-1)};\]
the same manipulations then follow, except we now have
\[\frac{b_0}{c_{n+2}}
=\frac{\lfloor n/3\rfloor(3n-3)}{n^2-5n+12}
=\frac{(n-2)(n-1)}{n^2-5n+12}.\]

Finally, when $n\equiv1\pmod2$ in \hyperref[theorem:2]{Theorem 2}, we apply 
Sheppard's identity with $N=n/2-3/2$, $a=1-n/2$, $b=1-m$, 
$d=\frac{1}{4}(9+2m-2n+Y_{m,n})$, and $e=\frac{1}{4}(9+2m-2n-Y_{m,n})$
to get
\begin{align*}
&\frac{\bigl(\frac{1}{4}(1-2m+Y_{m,n})\bigr)_{n/2-3/2}
\bigl(\frac{1}{4}(1-2m-Y_{m,n})\bigr)_{n/2-3/2}}
{\bigl(\frac{1}{4}(5-2m-2n+Y_{m,n})\bigr)_{n/2-3/2}
\bigl(\frac{1}{4}(5-2m-2n-Y_{m,n})\bigr)_{n/2-3/2}}\\
&\qquad\times\pFq{3}{2}{\frac{3}{2}-\frac{n}{2}, 1-\frac{n}{2}, 2}
{\frac{1}{4}(9+2m-2n+Y_{m\normalcomma n}), \frac{1}{4}(9+2m-2n-Y_{m\normalcomma n})}{1}\\
&\quad=\frac{\prod_{j=0}^{n/2-5/2}\bigl(j^2-\frac{2m-1}{2}j+\frac{m(n-1)}{2}\bigr)}
{\prod_{j=0}^{n/2-5/2}\bigl(j^2+\frac{5-2m-2n}{2}j+\frac{6-6m-5n+4mn+n^2}{4}\bigr)}
\frac{6+4m-5n+n^2}{4m}.
\end{align*}
The resulting hypergeometric series is identical to that given in \hyperref[sec:theorem2]{Section 2},
and the products are almost identical, except that the $mn/2$ in the original numerator is now
$m(n-1)/2$. The arguments that follow are also essentially the same.

\newpage
\section*{Appendix B: Using Mathematica to follow this paper}\label{sec:mathematica}
Most of the computations in this paper were done and verified using Wolfram Mathematica.
The computations involving Gosper's algorithm were done using Paule and Schorn's 
Mathematica package \texttt{fastZeil} \cite{fastZeil}. For example, the first few
lines of \hyperref[sec:theorem1]{Section 1} written in Mathematica would look something like this:
\\
\begin{adjustwidth}{-75pt}{100pt}
\begin{mmaCell}[moredefined={X},morepattern={n_, n}]{Input}
  X[n_] := Sqrt[1+6n-3\mmaSup{n}{2}]

\end{mmaCell}

\begin{mmaCell}{Input}
  -Binomial[n-3j,3] == -\mmaFrac{1}{6}(n-3j)(n-1-3j)(n-2-3j)

\end{mmaCell}

\begin{mmaCell}{Output}
  True

\end{mmaCell}

\begin{mmaCell}[moredefined={X}]{Input}
  FullSimplify[Binomial[n,3]-Binomial[n-3j,3]

    == \mmaFrac{9}{2}j(j-\mmaFrac{1}{6}(3n-3+X[n]))(j-\mmaFrac{1}{6}(3n-3-X[n]))]

\end{mmaCell}

\begin{mmaCell}{Output}
  True

\end{mmaCell}
\end{adjustwidth}

And the proof of the first identity in \hyperref[sec:gosper]{Section 3} would look like this: 
\\
\begin{adjustwidth}{-75pt}{100pt}
\begin{mmaCell}{Input}
  << RISC`fastZeil` (* import fastZeil *)

\end{mmaCell}

\begin{mmaCell}[moredefined={Gosper, X}]{Input}
  Gosper[\mmaFrac{Pochhammer[1-\mmaFrac{n}{3},k]Pochhammer[\mmaFrac{4}{3}-\mmaFrac{n}{3},k]Pochhammer[2,k]}{Pochhammer[\mmaFrac{1}{6}(11-n+X[n]),k]Pochhammer[\mmaFrac{1}{6}(11-n-X[n]),k]k!}, \{k,0,\mmaFrac{n}{3}-1\}]

\end{mmaCell}

\begin{mmaCell}{Print}
  If -1+n/3 is a natural number and
     (-3+n)/3 is no negative integer and
     -2+n != 0, then: 

\end{mmaCell}

\begin{mmaCell}{Output}
  \{Sum[\mmaFrac{Pochhammer[2,k] Pochhammer[1-\mmaFrac{n}{3},k] Pochhammer[\mmaFrac{4}{3}-\mmaFrac{n}{3},k]}{k!Pochhammer[\mmaFrac{1}{6}(11-n-\mmaSqrt{1+6n-3\mmaSup{n}{2}}),k] Pochhammer[\mmaFrac{1}{6}(11-n+\mmaSqrt{1+6n-3\mmaSup{n}{2}}),k]},\{k,0,-1+\mmaFrac{n}{3}\}]

    == -\mmaFrac{(5-n+\mmaSqrt{1+6n-3\mmaSup{n}{2}}) (-5+n+\mmaSqrt{1+6n-3\mmaSup{n}{2}})}{12(-2+n)}\}

\end{mmaCell}

\begin{mmaCell}{Input}
  Prove[]

\end{mmaCell}

\begin{mmaCell}{Input}
  FullSimplify[-\mmaFrac{(5-n+\mmaSqrt{1+6n-3\mmaSup{n}{2}}) (-5+n+\mmaSqrt{1+6n-3\mmaSup{n}{2}})}{12(-2+n)}]

\end{mmaCell}

\begin{mmaCell}{Output}
  \mmaFrac{6+(-4+n)n}{3(-2+n)}

\end{mmaCell}
\end{adjustwidth}

Here, the \texttt{Prove[]} function from the \texttt{fastZeil} package generates a proof of the identity
in a separate Mathematica notebook, which amounts to giving the rational function $R(k)$ as discussed
in \hyperref[sec:gosper]{Section 3}.

If the reader owns a copy of Mathematica and wishes to follow the computations in this paper,
a Mathematica notebook containing the essential computations of this paper is available from 
\url{https://boonsuan.github.io/orderings.nb}.
\end{document}